\documentclass{amsart}
\usepackage{amsmath}
\usepackage{amsthm}
\usepackage{amsfonts}
\usepackage{amssymb}
\usepackage{amsbsy}
\usepackage{amscd}
\usepackage{eucal}
\usepackage{amsaddr}
\usepackage{listings}


\newtheorem{theorem}{Theorem}


\numberwithin{equation}{section}


\newcommand{\beq}{\begin{equation}}
\newcommand{\eeq}{\end{equation}}






\newcommand{\skd}{\vspace*{0.2cm}}


\begin{document}
 
\title{New Approximation method for the computation of particular values of polylogarithms}

\author{Abdalla M. Aboarab}
\address{\sl Junior Student at Kafr El-Sheikh STEM School\\
E--mail: abdulah.10840@stemksheikh.moe.edu.eg}

\keywords{Polylogarithm, Power Series, Computation, numerical differentiation}

\begin{abstract}
An efficient procedure for the computation of $Li_{s}(z)$ where $s<0$
is here presented. We started with Polylogarithm $Li_{s}(z)$ where $s<0$. The summation of $n^{s}z^{n}$ is evaluated using a new method. An assumption is made that the power $n$ is multiplied by $x$ where $x=1$; then the series is integrated $s$ time in order to cancel the term $n^s$ out leaving the term $z^n$. By simply taking the derivative of the result $s$-times, an expression to evaluate the series arises which include only a constant term and the s$^{th}$ order of derivative of the summation of a simple geometric series.
The computation of $Li_{s}(z)$ can then be performed in $\mathcal{O}(n)$ operations.
\end{abstract}

\maketitle

\section{Introduction}
\label{se:introduction}

The efficient computation of the Polylogarithm 
is a very important problem in numerical analysis and applied mathematics
with a wide range of applications including, just to mention a few,
in quantum statistics, the polylogarithm function appears as the closed form of integrals of the Fermi–-Dirac distribution and the Bose–-Einstein distribution, and is also known as the Fermi-–Dirac integral or the Bose-–Einstein integral\cite{DLMF} 

The difficulty of the problem lies essentially
in the fact that there is no formula introduced to reduce $Li_{s}(z)$. The only way to compute this function for large numbers is to sum up so many terms which takes much time to compute.

In this paper we present an alternative procedure.
The basic idea of the method is to introduce a general formula to reduce $Li_{s}(z)$ where $s<0$ by integrating and differentiating the series $s$-times with respect to an implemented variable $x$ where $x=1$, then we computed $Li_{s}(z)$ using the suggested method.

Finally, the numerical implementation of the algorithm follows straightforwardly and is very efficient when compared to the loop method. the s$^{th}$-order derivative at $x=1$ can be computed in $\mathcal{O}(n)$ operations.

\section{Derivation of the method}
\label{se:interpretation}

The standard form of the Polylogarithm reads \cite{wolf}:
\beq
Li_{s}(z) = \sum_{k=1}^\infty \frac{z^k}{k^s} 
\label{1}
\eeq
and when $s<0$ it turns into :
\beq
\sum_{k=1}^\infty k^s z^k 
\label{5bis}
\eeq
where ($k$ and $z \in \mathbb{R}$). We can now state the following theorem.
\begin{theorem}
\label{the:1}
Suppose there is an infinite sum $\sum_{k=1}^{\infty} k^{s}z^{k}$ where ($k$ and $z \in \mathbb{R}$). Then, this sum can be expressed as :
\beq
(\frac{1}{\ln z})^{s}\frac{d^{s}}{dx^{s}}(\frac{1}{1-z^{x}})\bigg|_{x=1}
\label{6}
\eeq

\end{theorem}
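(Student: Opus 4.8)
The plan is to run the differentiation-under-the-sum argument directly: rather than starting from the closed form, I would start from the bare geometric series, introduce the auxiliary exponent $x$, and show that repeated differentiation in $x$ manufactures the missing power $k^{s}$. Concretely, fix a real $z$ with $0<z<1$ so that everything converges, and read $s$ as the positive integer giving the order of differentiation. The engine of the whole argument is the elementary identity $z^{kx}=e^{kx\ln z}$, whence $\frac{d}{dx}z^{kx}=k\ln z\, z^{kx}$ and, by induction, $\frac{d^{s}}{dx^{s}}z^{kx}=(k\ln z)^{s}z^{kx}$. Evaluating at $x=1$ gives $(k\ln z)^{s}z^{k}=(\ln z)^{s}k^{s}z^{k}$, which is exactly the term I want up to the constant factor $(\ln z)^{s}$.

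Next I would assemble the full series. Summing the term-by-term identity over $k$ yields $\sum_{k=1}^{\infty}\frac{d^{s}}{dx^{s}}z^{kx}\big|_{x=1}=(\ln z)^{s}\sum_{k=1}^{\infty}k^{s}z^{k}$. To turn the left-hand side into the stated closed form I would recognize the underlying geometric series: for $0<z<1$ and $x$ near $1$ one has $\sum_{k=0}^{\infty}z^{kx}=\frac{1}{1-z^{x}}$. The $k=0$ term contributes the constant $1$, whose $s$-th derivative vanishes for $s\ge 1$, so differentiating $\frac{1}{1-z^{x}}$ and differentiating $\sum_{k=1}^{\infty}z^{kx}$ give the same thing; this is precisely why the clean expression $\frac{1}{1-z^{x}}$, rather than $\frac{z^{x}}{1-z^{x}}$, may be used in \eqref{6}. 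Multiplying through by $(\ln z)^{-s}=\left(\frac{1}{\ln z}\right)^{s}$ and rearranging then produces the claimed formula.

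The step that genuinely needs care — and the one I would flag as the main obstacle — is interchanging the $s$-fold differentiation with the infinite summation, i.e.\ justifying $\frac{d^{s}}{dx^{s}}\sum_{k}z^{kx}=\sum_{k}\frac{d^{s}}{dx^{s}}z^{kx}$ in a neighborhood of $x=1$. I would handle this by the standard theorem on termwise differentiation of a series: it suffices to show that the differentiated series $\sum_{k}(k\ln z)^{s}z^{kx}$ converges uniformly on a closed interval $x\in[1-\delta,1+\delta]$. Since $0<z<1$, on such an interval $z^{kx}\le z^{k(1-\delta)}$ decays geometrically while the factor $(k\ln z)^{s}$ grows only polynomially, so a Weierstrass $M$-test with $M_{k}=|k\ln z|^{s}z^{k(1-\delta)}$ closes the argument; the same bound applied to the lower-order derivatives lets the induction proceed and simultaneously guarantees that $\sum_{k}k^{s}z^{k}$ itself converges, so the identity is between two genuinely finite quantities. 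I do not anticipate any deeper difficulty; the only caveat worth stating is the domain restriction $0<z<1$ (and the attendant meaning of $\ln z$), since for $z$ outside the unit disk the series diverges and the formula must instead be read through analytic continuation.
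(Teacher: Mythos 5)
Your proposal is correct, and it reaches the formula by a route that is recognizably the mirror image of the paper's. The paper starts from the \emph{finite} sum $S=\sum_{k=1}^{b}k^{s}z^{kx}$, integrates it $s$ times in $x$ to strip off the factor $k^{s}$, evaluates the resulting geometric sum in closed form as $\frac{z^{x}(z^{bx}-1)}{z^{x}-1}$, differentiates $s$ times to undo the integration, and only then lets $b\to\infty$; convergence for $|z|<1$ is checked by the ratio test. You instead go in the differentiation direction from the outset: the identity $\frac{d^{s}}{dx^{s}}z^{kx}=(k\ln z)^{s}z^{kx}$ applied termwise to the infinite geometric series $\sum_{k\ge 1}z^{kx}=\frac{z^{x}}{1-z^{x}}$. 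The two arguments rest on the same engine, but yours buys two genuine improvements. First, it supplies the justification the paper omits: the paper's final step silently interchanges $\lim_{b\to\infty}$ with $\frac{d^{s}}{dx^{s}}$ (and its integration step silently discards constants of integration), whereas your Weierstrass $M$-test on $[1-\delta,1+\delta]$, iterated through the lower-order derivatives, legitimizes the termwise differentiation and simultaneously establishes convergence of $\sum_{k}k^{s}z^{k}$. Second, you explain the discrepancy between the theorem's displayed kernel $\frac{1}{1-z^{x}}$ and the proof's $\frac{-z^{x}}{z^{x}-1}$ --- they differ by the constant $1$ coming from the $k=0$ term, which dies under $s\ge 1$ differentiations --- a point the paper leaves unaddressed. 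Your restriction to $0<z<1$ (so that $\ln z$ is real) is also the honest domain for the real-variable argument; the paper's claim for all $|z|<1$, and its table entry at $z=2$, implicitly require the analytic-continuation reading you flag.
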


\begin{proof} 
we start with the finite sum $\sum_{k=1}^{b} k^{s}z^{k}$.
The power of $z$ can be multiply by $ x$ where $x = 1$. Then the sum becomes:
\beq
S = \sum_{k=1}^{b} k^{s}z^{kx}
\label{8}
\eeq
Integrating S with respect to $x$,

\beq
\int S dx=\frac{1}{\ln z}\sum_{k=1}^{b}k^{s-1}z^{kx}
\label{9}
\eeq
Then, by integrating S (s times), the s$^{th}$ integral of S becomes:
\beq
\int \int .....\int S (dx)^{s}=(\frac{1}{\ln z})^{s}\sum_{k=1}^{b}z^{kx}
\label{10}
\eeq
Now, by solving $\sum_{k=1}^{b}z^{kx}$as a geometric series,
\beq
\sum_{k=1}^{b}z^{kx}=\frac{z^x(z^{bx}-1)}{z^x-1}
\label{11}
\eeq
from equations (2.6) and (2.7), it is concluded that :
\beq
\int \int .....\int S (dx)^{s} = (\frac{1}{\ln z})^{s}(\frac{z^x(z^{bx}-1)}{z^x-1})
\label{12}
\eeq
Now, by applying the fundamental theorem of calculus, it is found that:
\beq
S = \sum_{k=1}^{b} k^{s}z^{kx} =  (\frac{1}{\ln z})^{s}\frac{d^s}{dx^s}(\frac{z^x(z^{bx}-1)}{z^x-1})\bigg|_{x=1}
\eeq
Now we go into the case that the series is evaluated between zero and
	infinity, To determine
	when $S$ converges, the series $S=\sum_{k=1}^{b} k^{s}z^{kx}$ is evaluated between zero and infinity. By testing convergence of the function by D’Alembert test, $\lim_{n \to \infty }(\frac{S(k+1)}{S(k)})$ has to be found. By solving the
	limit,
\beq
\lim_{k\to \infty} \left |(\frac{(k+1)^{s}.z^{(k+1)}}{k^{s}.z^{k}})  \right |=\lim_{k\to \infty}\left |((\frac{k+1}{k})^s .\frac{z^{(k+1)}}{z^{k}})  \right |=\lim_{K\to \infty}\left |((\frac{1+\frac{1}{k}}{1}) . z)  \right | = \left | z \right |
\eeq
And thus, three cases are to be considered. The first case is when $\left | z \right |=1$ then the sum turns into $\sum_{k=1}^{\infty} k^{s}$ which is Riemann zeta function $\zeta (-s)$ \cite{zeta}. The second case is when $\left | z \right |>1$; the series automatically
fails D’Alembert test, and consequently, it diverges. The third case is when $\left | z \right |<1$; it passes D’Alembert test because $\lim_{k \to \infty }\left |(\frac{S(k+1)}{S(k)})\right |<1$, and so, the series converges.
	
From the previous examination, it is concluded that the series converges if $\left | z \right |<1$ but it can be computed  for more values using analytic continuation\cite{wolf}. Applying Theorem 1 and equation (2.9) then taking the limit at infinity, the series can be written as:
\beq
\lim_{b\to \infty} (\frac{1}{\ln z})^{s}\frac{d^s}{dx^s}(\frac{z^x(z^{bx}-1)}{z^x-1})\bigg|_{x=1}=(\frac{1}{\ln z})^{s}\frac{d^s}{dx^s}(\frac{-z^x}{z^x-1})\bigg|_{x=1}
\eeq
and from equation (2.1) we conclude that:
\beq
Li_{-s}(z) = (\frac{1}{\ln z})^{s}\frac{d^s}{dx^s}(\frac{-z^x}{z^x-1})\bigg|_{x=1} \quad s>0,  \left | z \right |<1
\eeq
\end{proof}

\section{Computations}

The numerical implementation of the algorithm first requires the computation of the s$^{th}$ order derivative at $x=1$, then it multiplied by the term $(\frac{1}{\ln{z}})^s$ and it was computed using central finite difference \cite{comp} \cite{cent}.
\begin{align}
    \frac{d^nf}{dx^n}(x)=\frac{\delta^{n}_{h}[f](x)}{h^n}+\mathcal{O}(h^2)\quad (h<0)\\
    \delta^{n}_{h}[f](x)=\sum^n_{i=0}(-1)^i\binom{n}{i}f(x+(\frac{n}{2}-i)h)
\end{align}

Finally, formula (2.12) makes it possible to take full advantage of the computational efficiency of the 
the Fast numerical differentiation both in terms of speed of computation and of accuracy \cite{comp}. 
The calculation of the s$^{th}$ order derivative at $x=1$ can consequently be accomplished 
in $\mathcal{O}(n)$ operations.

Feasibility and accuracy of the algorithm have been verified by direct comparison 
of the obtained numerical results with the true Polylogarithm
values along with the absolute error are given
in Table \ref{tab:1}.
\begin{table}[p]
\caption{\label{tab:1} \it computed Polylogarithms $Li_s(z)$ for several random values.}

\begin{tabular}{|c|r|c|}    
\hline
\rule{0pt}{4ex} $\boldsymbol{s, z\,}$ 
& \bf Standard $\boldsymbol{Li_{s}(z)}$ \hspace{0.6cm} & \bf Computed $\boldsymbol{Li_{s}(z)}$ \\[+5pt]
\hline\rule{0pt}{3ex}
-1, 0.3  & 0.612244897959183673469 & 0.612245 \\[+5pt]
-2, 0.1  & 0.150891632373113854595 & 0.150892 \\[+5pt]
-2, 2    &-6.000000000000000000000 & -6.00001\\[+5pt] 
-3, 0.4   & 8.51851851851851851852 & 8.51759 \\[+5pt]
-4, 0.1  &0.374434791444393639181 & 0.374434 \\[+5pt]
-5, 0.56 & 3158.11837695681943777 & 3160.88 \\[+5pt]

\hline
\end{tabular} 
\end{table}

The increment of performances, with respect to the computation of the Polylogarithm, has been verified in terms of speed of computation at (nearly) equality of precision.
All accuracies have been determined by comparing the results of the algorithm with the reference values 
of $Li_{s}(z)$ computed with Mathematica \cite{Mathematica}.
All the results have confirmed increase of computational speed 
($\mathcal{O}(n)$).
Such an increase of performances will become even more crucial for numerical analysis and quantum statistics.

\skd
\skd

\appendix
\section{\\mathematica code for the method}

\begin{lstlisting}
z=(*the value of z*)
f[x_]=(-z^x)/(z^x-1)
n=(*order of derivative "s"*);
h=0.01;(*this number conrols the accuracy of approximation*)
delta = 0;
For[i=0;bin=1,i<=n,bin=bin*(n-i)/(i+1);i=i+1,
delta = delta  + ((-1)^i)*bin*f[1+h*(n/2-i)]
]
diff=delta/h^n;
(1/Log[z])^n(diff)




\end{lstlisting}


\begin{thebibliography}{99}

\bibitem{DLMF}
 F. W. J. Olver, A. B. Olde Daalhuis, D. W. Lozier, B. I. Schneider, R. F. Boisvert, C. W. Clark, B. R. Miller, \and B. V. Saunders
 Fermi--Dirac and Bose--Einstein Integrals,
\textsl{[DLMF, 25.12(iii)]} 

\bibitem{wolf}
Weisstein, Eric W
Polylogarithm,From MathWorld--A Wolfram Web Resource. http://mathworld.wolfram.com/Polylogarithm.html
\textsl{}

\bibitem{zeta}
Sondow, Jonathan \and  Weisstein, Eric W.
Riemann Zeta Function
\textsl{From MathWorld--A Wolfram Web Resource. http://mathworld.wolfram.com/RiemannZetaFunction.html}

\bibitem{cent}
Weisstein, Eric W.,
\textsl{ From MathWorld--A Wolfram Web Resource. http://mathworld.wolfram.com/CentralDifference.html}

\bibitem{comp}
 Boole, George,,
 A Treatise On The Calculus of Finite Differences, 2nd ed., Macmillan and Company,
 
\bibitem{Mathematica}
Mathematica, Version 7.0, 
Wolfram Research, Inc., Champaign, IL, 2008.

\bibitem{hobson}
K.F.RILEY,M.P.HOBSON \and S.J.BENCE,
”Mathematical Methods for Physics and Engineering: Third Edition.”, PP.115-144.

\bibitem{calc}
G.B.Thomas,Jr, 
”Thomas Calculus: 12th Edition”, PP.532-605

\end{thebibliography}
\end{document}